\newtheorem{theorem}{Theorem}[section]
\newtheorem*{theorem A}{Theorem A}
\newtheorem*{theorem B}{N\"olker's Theorem}
\newtheorem{definition}{Definition}[section]
\theoremstyle{remark}
\theoremstyle{remark}
\begin{document}

\begin{frontmatter}
\papertitle{Constructing Arbitrarily Large Graphs with a Specified Number of Hamiltonian Cycles}



\author[label1]{Michael Haythorpe}

\address[label1]{\small School of Computer Science, Engineering and Mathematics,\\
Flinders University,\\
1284 South Road, Clovelly Park, SA 5042, Australia

\vspace*{2.5ex}
 {\normalfont michael.haythorpe@flinders.edu.au}
 
}
\begin{abstract}
A constructive method is provided that outputs a directed graph which is named a broken crown graph, containing $5n-9$ vertices and $k$ Hamiltonian cycles for any choice of integers $n \geq k \geq 4$. The construction is not designed to be minimal in any sense, but rather to ensure that the graphs produced remain non-trivial instances of the Hamiltonian cycle problem even when $k$ is chosen to be much smaller than $n$.

\let\thefootnote\relax\footnotetext{Received: 29 August 2014,\quad Revised: 7 March 2016,\quad   Accepted: 11 March 2016.\\[3ex]
  }

\end{abstract}

\begin{keyword}
Hamiltonian cycles \sep Graph Construction \sep Broken Crown

Mathematics Subject Classification : 05C45

\end{keyword}

\end{frontmatter}

\section{Introduction}

The Hamiltonian cycle problem (HCP) is a famous NP-complete problem in which one must determine whether a given graph contains a simple cycle traversing all vertices of the graph, or not. Such a simple cycle is called a Hamiltonian cycle (HC), and a graph containing at least one Hamiltonian cycle is said to be a Hamiltonian graph.

Typically, randomly generated graphs (such as Erd\H{o}s-R\'{e}nyi graphs), if connected, are Hamiltonian and contain many Hamiltonian cycles. Although HCP is an NP-complete problem, for these graphs it is often fairly easy for a sophisticated heuristic (e.g. see Concorde \cite{concorde}, Keld Helsgaun's LKH \cite{LKH} or Snakes-and-ladders Heuristic \cite{SLH}) to discover one of the multitude of Hamiltonian cycles through a clever search. However, some Hamiltonian graphs may contain only a small number of Hamiltonian cycles. Among the most famous of these is the infinite family of 3-regular graphs known as generalized Petersen graphs \cite{GP} GP$(n,2)$, which for $n = 3 \mod 6$ always contain 3 Hamiltonian cycles even though the graphs can grow arbitrarily large, containing $2n$ vertices. Even for dense graphs, it is possible that the number of Hamiltonian cycles is small, with Sheehan's infinite family of maximally uniquely-Hamiltonian graphs \cite{sheehan} (that is, graphs with exactly one Hamiltonian cycle and the maximum possible ratio of edges to vertices) being one such result.

However, there are situations where it may be desirable to be able to specify the number of Hamiltonian cycles desired in a constructed graph, without restricting the number of vertices too heavily. Such examples are often among the most taxing for HCP algorithms, and provide excellent instances for benchmarking. The generalized Petersen graphs GP$(n,2)$ for $n = 1 \mod 6$ contain exactly $n$ Hamiltonian cycles, but there is no control over the order of the graph, which is fixed at $2n$ vertices.

In this manuscript, a constructive procedure will be presented that, for any choice of integer $n \geq 4$, outputs a directed graph of order $5n-9$ containing $n$ Hamiltonian cycles (in a directed sense). For each Hamiltonian cycle, there will be two directed edges which are traversed by only that Hamiltonian cycle and none of the others. Therefore, the removal of either of these two directed edges eliminates that Hamiltonian cycle from the graph while preserving the rest. Then, if exactly $k$ Hamiltonian cycles are desired, one may simply remove $n-k$ directed edges to obtain such a graph. Obviously, this can be performed whenever $n \geq k$.

It is worth noting that the construction outlined in this manuscript is not the only such method for constructing graphs with a controlled number of Hamiltonian cycles, or even a minimal construction in any sense. Rather, the construction is designed in such a way that in addition to providing a reasonably small graph, the structural complexity of the graph itself is not diminished even when $k$ is much smaller than $n$. To see why this is a noteworthy feature, consider the well-known Wheel graph $W_n$ \cite{harary} which can be thought of simply as a cycle graph of length $n-1$ along with an additional vertex $v$ which is connected to all other vertices. It is easy to determine that this graph contains $n-1$ Hamiltonian cycles and each edge incident on $v$ is used in exactly two Hamiltonian cycles, so it is possible to control the number of Hamiltonian cycles to some degree by removing these edges. However, if most of the edges are removed to ensure only a small number of Hamiltonian cycles are present, the remaining graph is a trivial instance of HCP where almost every vertex is degree 2 and the few remaining Hamiltonian cycles are easy to discover. Such an example is displayed in Figure \ref{fig-wheel}, where the left graph is $W_{9}$ and the right graph is the modified version with only two Hamiltonian cycles, which are trivial to find.

\vspace*{1cm}\begin{figure}[h!]\begin{center}\includegraphics[scale=0.35]{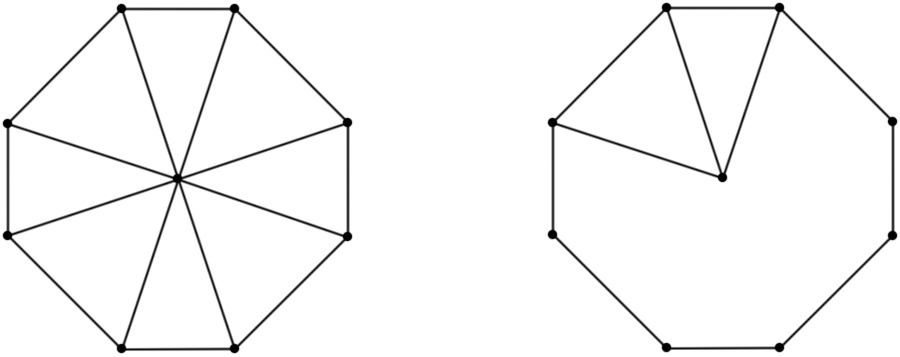}\caption{The Wheel Graph $W_{9}$ and the modified version with edges removed to ensure only 2 Hamiltonian cycles remain.\label{fig-wheel}}\end{center}\end{figure}

\section{Crown Subgraph}

In order to produce the desired graph, it is first necessary to introduce a parametrised family of subgraphs for each integer $n \geq 4.$

\begin{definition}The {\em Crown Subgraph} with parameter $n$ that is a natural number greater than or equal to 4, denoted by $\mathcal{C}_n$, is a directed subgraph containing $5n - 10$ vertices, and the following directed edges:

\begin{itemize}\item $(i,i+1)$ and $(i+1,i)$ for all $i = 1, \hdots, 5n - 11$,
\item $(1,5n-10)$ and $(5n-10,1),$
\item $(2n,2n-2)$ and $(5n-10,2),$
\item $(2n+3i,2n-2-2i)$, for all $i = 1, \hdots, \lceil \frac{n-4}{2} \rceil$,
\item $(5n-10-3i,2i+2)$, for all $i = 1, \hdots, \lfloor \frac{n-4}{2} \rfloor$.\end{itemize}\end{definition}

Suppose that the subgraph $\mathcal{C}_n$ is included inside a larger graph. Then there will be a set of incoming edges that go into $\mathcal{C}_n$ from other vertices in the larger graph, and a set of outgoing edges that depart from $\mathcal{C}_n$. Consider the situation where there are $n$ incoming edges and $n$ outgoing edges (each of which has a label from $1$ to $n$), which are incident on the following vertices:

\begin{itemize}\item Incoming edge labelled $i$ is incident to vertex $2i-1$ for all $i = 1, \hdots, n$,
\item Outgoing edge labelled $1$ is incident from vertex $5n-10$,
\item Outgoing edge labelled $2$ is incident from vertex $1$,
\item Outgoing edge labelled $n-1$ is incident from vertex $2n-1$,
\item Outgoing edge labelled $n$ is incident from vertex $2n$,
\item Outgoing edge labelled $i+2$ is incident from vertex $5n-9-3i$ for all $i = 1, \hdots, \lfloor \frac{n-4}{2} \rfloor$,
\item Outgoing edge labelled $n-1-i$ is incident from vertex $2n-1+3i$ for all $i = 1, \hdots, \lceil \frac{n-4}{2} \rceil$.\end{itemize}

The Crown Subgraph is illustrated in Figures \ref{fig-crown} and \ref{fig-crown2}, showing the subgraph before and after the addition of the incoming/outgoing edges. Because of the way the figure is displayed, vertices $1, \hdots, 2n-1$ are referred to as {\em top vertices} and the remaining vertices are referred to as {\em bottom vertices}. Notice that the incoming edges are all incident to the top vertices.

\vspace*{1cm}\begin{figure}[h!]\begin{center}\includegraphics[scale=0.3]{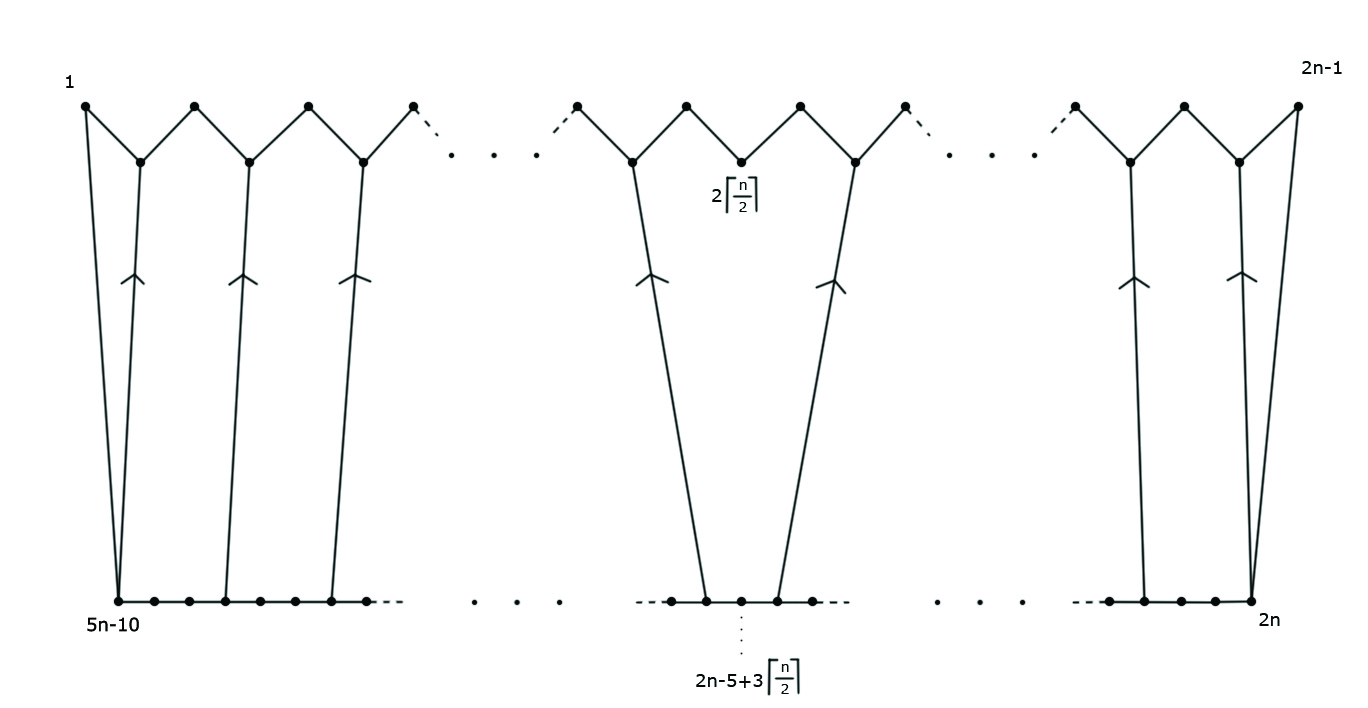}\caption{The Crown Subgraph with parameter $n$. The top-left vertex is labelled 1, and the vertex labellings increase in a clockwise fashion.\label{fig-crown}}\end{center}\end{figure}

\vspace*{1cm}\begin{figure}[h!]\begin{center}\includegraphics[scale=0.3]{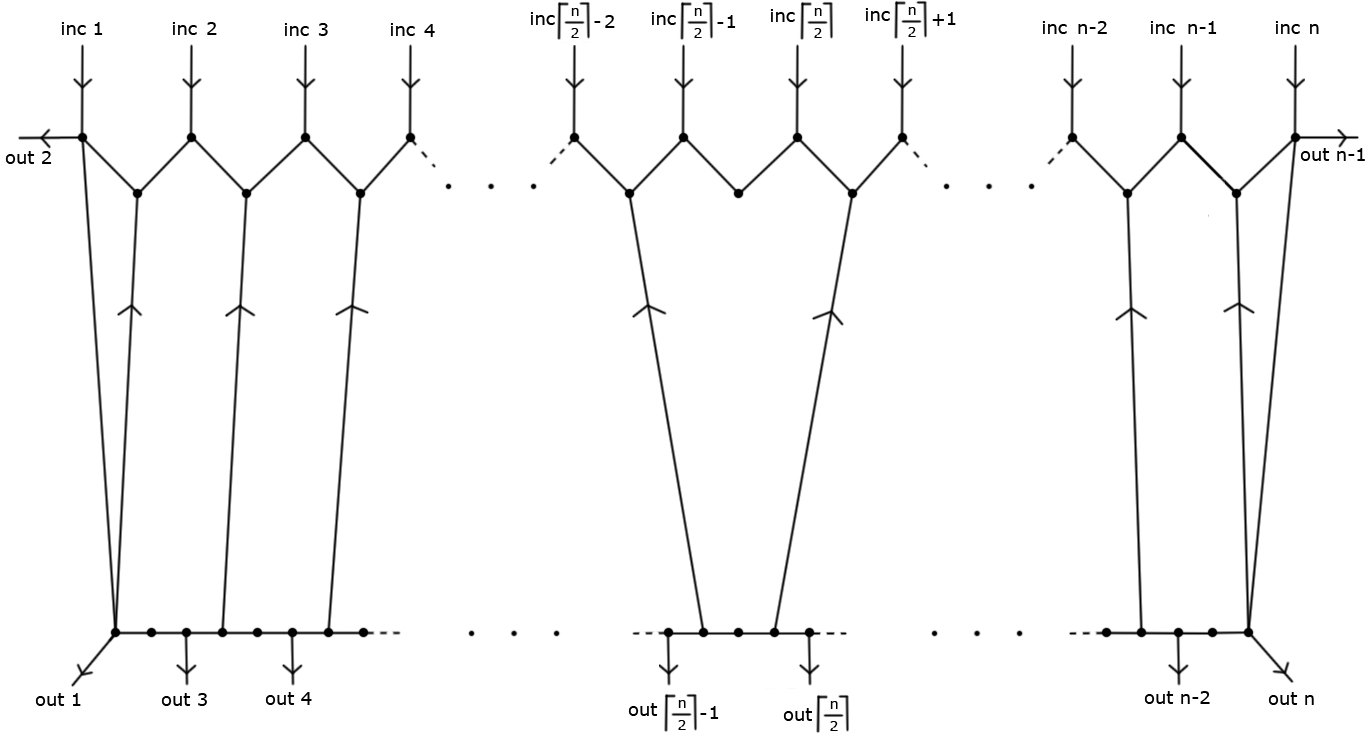}\caption{The Crown Subgraph with parameter $n$ and incoming and outgoing edges attached.\label{fig-crown2}}\end{center}\end{figure}

\begin{theorem}Any Hamiltonian cycle in a graph containing $\mathcal{C}_n$ must only traverse a single incoming edge and a single outgoing edge, and both edges must have the same label. Furthermore, there is only one path a Hamiltonian cycle may take between any given pair of incoming and outgoing edges.\label{thm-crown}\end{theorem}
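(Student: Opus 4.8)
The plan is to analyse how a Hamiltonian cycle $H$ interacts with the partition of $\mathcal{C}_n$ into the \emph{top} vertices $\{1,\dots,2n-1\}$ and the \emph{bottom} vertices $\{2n,\dots,5n-10\}$. First I would read off from the definition the structural facts I need: the top vertices induce a path (each edge present in both directions) and so do the bottom vertices; the only arcs directed from a top vertex to a bottom vertex are $(2n-1,2n)$ and $(1,5n-10)$, while every other arc joining the two parts --- namely $(2n,2n-1)$, $(5n-10,1)$ and all of the chords --- runs from a bottom vertex to a top vertex; every chord points into an even top vertex, all but one of the even top vertices receive exactly one chord, every even top vertex has out-degree $2$, the incoming edges attach to odd top vertices, and the outgoing edges leave vertices $1$, $2n-1$ and various bottom vertices.

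For the first two assertions, note that $H\cap\mathcal{C}_n$ is a disjoint union of $k\ge1$ directed paths, each starting at an odd top vertex. Because the top and the bottom each induce a path, the trace of $H$ on the top (and on the bottom) is a family of vertex-disjoint integer intervals, each traversed monotonically, so every incoming edge, outgoing edge, chord or crossing arc that $H$ uses is incident to an \emph{endpoint} of one of these intervals. A short forcing step --- if a chord is used into an even vertex $v$ then it is the incoming arc of $v$, whence both arcs from the top-neighbours of $v$ towards $v$ are unused, and the only remaining possibility for the outgoing arc of $v$ is along its interval --- shows that at every gap between two consecutive top intervals the (unique) even one of the two abutting vertices has a chord used into it, so the number of chords used is (number of top intervals) $-1$. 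Counting the two endpoints of every top interval and of every bottom interval, using that the number of bottom intervals equals the number of top-to-bottom arcs used (each bottom interval is entered by exactly one such arc) and that the number of outgoing edges used is $k$, the bookkeeping collapses to the identity $e+f+d'=1$, where $e$ and $f$ count the outgoing edges used that leave a top, respectively a bottom, vertex and $d'$ counts the uses of $(2n,2n-1)$ and $(5n-10,1)$. Since $e+f=k\ge1$, this forces $k=1$ (and incidentally $d'=0$): $H$ uses exactly one incoming and one outgoing edge.

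With $k=1$, $H$ meets $\mathcal{C}_n$ in a single Hamiltonian path $P$, and the counts above pin $P$ down to a rigid zig-zag: from its entry vertex $2a-1$ it runs monotonically along a top interval to $1$ or to $2n-1$, crosses down via $(1,5n-10)$ or $(2n-1,2n)$, runs monotonically along a bottom interval to a chord source, climbs that chord into an even top vertex, and repeats, with at most two downward crossings in all (there being only two such arcs). I would finish by running through the bounded list of shapes this zig-zag can take, classified by the number of downward crossings, which one is used first, and at which vertex $P$ ends. In each shape the entry vertex determines the first top interval --- it is $[1,2a-1]$ or $[2a-1,2n-1]$ --- hence the first downward arc, hence the bottom vertex at which the forced bottom interval ends, hence the chord used there, hence the next top interval, and so on; each shape is either inconsistent for the given $a$ or determines $P$ uniquely, and in the consistent case one reads off that the final arc is labelled $a$. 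The main obstacle, and the only point requiring genuine care, is this index bookkeeping: one must verify that the offsets built into the chords --- $2n+3i\mapsto 2n-2-2i$ and $5n-10-3i\mapsto 2i+2$, with a step of $3$ on the bottom against a step of $2$ on the top --- are exactly what makes the unique consistent configuration for entry $2a-1$ the one whose exit edge also carries label $a$.
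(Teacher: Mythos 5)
Your proposal is correct in substance but takes a genuinely different route from the paper. The paper argues label by label: for each incoming edge (with the boundary labels $1,2,n-1,n$ handled separately) it walks the would-be Hamiltonian cycle through $\mathcal{C}_n$, eliminating every alternative by a short-cycle contradiction, and the fact that only one incoming and one outgoing edge can be used is obtained only implicitly, because the forced path already covers all of $\mathcal{C}_n$. You isolate that fact first via a global counting lemma: the traces of the cycle on the top and bottom paths are vertex-disjoint monotone intervals, a used chord must be the in-arc of the even vertex abutting a gap between top intervals and every gap forces exactly one such chord, so the number of chords used equals the number of top intervals minus one; combined with the facts that the number of bottom intervals equals the number of downward arcs used and that outgoing edges used equal incoming edges used, this collapses to $e+f+d'=1$, which I have checked and which indeed forces a single entry, a single exit, and $d'=0$ (so $(2n,2n-1)$ and $(5n-10,1)$ are never used) --- a statement the paper never isolates. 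This buys a clean split of the theorem into a structural lemma plus a finite enumeration of zig-zag shapes (note that every top interval must end at vertex $1$ or $2n-1$, so there are at most two of them, which shortens your list), whereas the paper's treatment is more elementary and exhibits the unique path for each label explicitly, which is what the broken crown construction later exploits. The one piece you have not carried out is the deferred index bookkeeping, which is the heart of the claim that the exit label equals the entry label; it does go through --- for instance, entering at $2a-1$ and moving right forces the chord with head $2a-2$ and tail $5n-4-3a$, after which the final bottom interval ends at $5n-9-3(a-2)$, precisely the source of outgoing edge $a$, while the competing chord head $2n-2-2(n-a)$ strands the cycle at a bottom vertex carrying no outgoing edge because of a mod-$3$ mismatch --- but a complete proof must write this enumeration out for all shapes and labels, including the boundary chords $(2n,2n-2)$ and $(5n-10,2)$.
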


\begin{proof}The boundary cases (ie incoming edges $1$, $2$, $n-1$ and $n$) must be considered separately from the other cases. Specifically, incoming edges 1 and $n$ will be considered together as the first case, and incoming edges 2 and $n-1$ will be considered together as the second case. Then the more general cases will be considered together as a third case. In each case, the proof will follow a similar pattern. Multiple possibilities will be examined, and all but one of them will lead to a contradiction in the sense that it will be impossible to avoid creating a short cycle. The remaining possibility will be to traverse the entirety of $\mathcal{C}_n$ and depart via the corresponding outgoing edge.

{\bf Case 1}: Suppose the HC enters $\mathcal{C}_n$ via incoming edge $1$. There are then three choices - the HC may either depart $\mathcal{C}_n$ immediately via outgoing edge $2$, travel right to vertex $2$, or down to vertex $5n-10$. If the HC departs $\mathcal{C}_n$ immediately, then $\mathcal{C}_n$ must be re-entered at a different time. During this alternative visit, vertex 2 needs to be visited, which can either occur by entering into one of the top vertices, and travelling left along the top vertices until vertex 2 is reached, or by first travelling to vertex $5n-10$ and then up to vertex 2. In the former case, a contradiction is reached because vertex 2 then cannot be departed without creating a short cycle. In the latter case, upon travelling up to vertex 2, the only option is to continue right along the top vertices. However, eventually the vertex which was used to re-enter $\mathcal{C}_n$ will be reached again, creating a short cycle. Since neither of the cases work, it is clear that the HC cannot immediately depart $\mathcal{C}_n$ after arriving via incoming edge 1.

Suppose instead that after arriving via incoming edge 1, the HC travels down to vertex $5n-10$. This vertex is adjacent to the degree 2 vertex $5n-9$ and so the HC must travel there immediately. However, then the same argument as above can be used to show that it is now impossible to visit vertex 2 without needing to create a short cycle. So this option also induces a contradiction. The only remaining option is to travel from vertex 1 straight to vertex 2. Then, the HC must continue along all of the top vertices. However, once it visits vertex $2n-1$, all of the incoming edges are incident to vertices which have already been visited. Therefore $\mathcal{C}_n$ cannot be re-entered, and so all vertices in $\mathcal{C}_n$ must be visited before departing. The only path left which visits the remaining vertices is to travel down to vertex $2n$, and go left along the bottom vertices until vertex $5n-10$ is reached, and outgoing edge $1$ is used to depart. This is the only valid path that may be used as part of a HC going through incoming and outgoing edges $1$.

Due to symmetry, an equivalent argument can be used to prove the theorem for incoming edge $n$.

{\bf Case 2}: Suppose the HC enters $\mathcal{C}_n$ via incoming edge $2$. There are then two choices - the HC may go left to vertex 2, or right to vertex 4. Suppose it goes left to vertex 2. By the same argument as in Case 1, the HC must then visit vertex 4 before it departs $\mathcal{C}_n$, since it will otherwise be impossible to re-enter and visit it later without creating a short cycle. Vertex 4 must be visited from one of the bottom vertices, or else a short cycle is created. Therefore, from vertex 3 the HC must travel through vertices 2, 1, $5n-10$, $5n-9$, $5n-8$, $5n-7$ and then up to vertex $4$. However, vertex $5n-7$ is adjacent to the degree 2 vertex $5n-6$, and so this path cannot be used in a HC without later creating a short cycle.

The only remaining alternative is to go right to vertex $4$, after entering via incoming edge $2$. Similarly to the previous argument, the HC must then visit vertex $2$ before it departs $\mathcal{C}_n$. The only valid way to do this is to travel right along all the top vertices, then down to vertex $2n$, and left along all the bottom vertices until vertex $5n-10$ is reached. At this stage the HC can either travel to vertices 1 or 2, but clearly travelling to vertex 1 means vertex 2 can't be visited without creating a short cycle, so the remaining option is to travel from $5n-10$ to vertex 2, then on to vertex 1, and to depart via outgoing edge $2$. This is the only valid path that may be used as part of a HC going through incoming and outgoing edges $2$.

Due to symmetry, an equivalent argument can be used to prove the theorem for incoming edge $n-1$.

{\bf Case 3}: Suppose the HC enters $\mathcal{C}_n$ via incoming edge $i+2$ for some $i \in \left[1, \hdots, \lfloor \frac{n-4}{2} \rfloor\right]$, arriving at vertex $2i+3$. The HC can then either go left to vertex $2i+2$ or right to vertex $2i+4$. However, using an equivalent argument to that in Case 2, the HC cannot go left first, or else it becomes impossible to later visit vertex $2i+4$ without creating a short cycle. Therefore, the HC goes right to vertex $2i+4$ first, and then as before, visit vertex $2i+2$ before departing $\mathcal{C}_n$. Again, using the same arguments as previously, vertex $2i+2$ must be visited from one of the bottom vertices, of which the only choice is vertex $5n-10-3i$. Therefore the HC to this point enters at vertex $2i+3$, travels right along the top vertices until vertex $2n-1$, then travels down to vertex $2n$, goes left along the bottom vertices until vertex $5n-10-3i$, and then travels up to vertex $2i+2$. It must then continue left along the top vertices until vertex $1$. It cannot immediately depart here because all vertices incident to incoming edges have now been visited, and so the remaining vertices in $\mathcal{C}_n$ must be visited before departing. The only remaining option is to then travel down to vertex $5n-10$, and go right along the bottom vertices until vertex $5n-9-3i$ is reached, at which time the HC departs via outgoing edge $i$. This is the only valid path that may be used as part of a HC going through incoming and outgoing edges $i$.

Due to symmetric, an equivalent argument can be used to prove the theorem for outgoing edge $n-i-1$ for any $i = 1, \hdots, \lceil \frac{n-4}{2} \rceil$.

Since all incoming edges have now been considered, the proof is concluded.\end{proof}

\section{Broken Crown Graph Construction}

Consider a graph constructed by taking a copy of $\mathcal{C}_n$ and attaching each of the incoming and outgoing edges to a single vertex $v$. From Theorem \ref{thm-crown} it may be immediately concluded that this graph contains exactly $n$ Hamiltonian cycles, where the $i$-th Hamiltonian cycle travels from $v$ along incoming edge $i$ to vertex $2i-1$, then through $\mathcal{C}_n$ in a unique way before finally travelling along outgoing edge $i$ and returning to $v$.

By removing any of the incoming or outgoing edges, precisely one of the Hamiltonian cycles is eliminated from the graph. Then, if a particular number of Hamiltonian cycles is required (bounded above by $n$), one can simply remove the desired number of incoming or outgoing edges. Depending on the desired properties of the resulting graph, it may be preferable to remove both the incoming and outgoing edge corresponding to each eliminated Hamiltonian cycle, or just one of them.

\begin{definition}A {\em broken crown} graph $\mathcal{B}_{n,k}$ is any graph constructed as above containing, $\mathcal{C}_n$ and modified so that only $k$ Hamiltonian cycles remain.\label{def-broken}\end{definition}

One beneficial property of broken crown graphs is that any broken crown graphs $\mathcal{B}_{n,k}$ and $\mathcal{B}_{n,j}$ will be structurally quite similar graphs for any $k$ and $j$. In terms of their use as benchmark instances, retaining a similar structure while having control over the number of Hamiltonian cycles allows a purer test of how the number of Hamiltonian cycles impacts on a given HCP algorithm. Even if $k$ is chosen to be much smaller than $n$, $\mathcal{B}_{n,k}$ is structurally no simpler than $\mathcal{B}_{n,n}$ as an instance of HCP.

One potential concern is that the vast majority of HCP algorithms are not designed for directed graphs such as $\mathcal{B}_{n,k}$. This can be remedied through the use of the well known conversion from directed HCP to undirected HCP \cite{karp}. This is done by replacing every vertex $i$ with three vertices $j_1$, $j_2$, $j_3$ and edges $(j_1,j_2)$, $(j_2,j_3)$. Then for every edge going into vertex $i$ in the original graph, a corresponding edge in the undirected graph is incident to vertex $j_1$. Likewise, for every edge departing vertex $i$ in the original graph, a corresponding edge in the undirected graph is incident to vertex $j_3$. This conversion is good as the set of Hamiltonian cycles in the original graph has a 1-to-1 correspondence with the set of Hamiltonian cycles in the undirected graph. The final undirected graph will contain $15n - 27$ vertices, and between $22n + k - 40$ and $21n + 2k - 40$ edges (depending on whether both incoming and outgoing edges, or just one, was removed).

If some slight variation in the number of vertices is desired (ie if it not desirable for the undirected graph to contain $3 \mod 15$ vertices) it is possible to replace vertex $v$ with any other subgraph containing a unique Hamiltonian path, with the incoming edges all incident to the starting vertex in the Hamiltonian path, and the outgoing edges all incident from the finishing vertex in the Hamiltonian path. For example, $v$ could be replaced by any Sheehan graph of any desired size by simply removing any edge $(a,b)$ which is in the unique Hamiltonian cycle, and making all incoming edges incident to $a$ and all outgoing edges incident from $b$.

Alternatively, every time an outgoing edge is removed (with the exception of outgoing edges 2 and $n-1$) in the construction of a broken crown graph, one of the bottom vertices becomes a degree 2 vertex. Since it will always be adjacent to another degree 2 vertex, it is possible to contract these two into a single vertex. This can be repeated up to $n-k$ times (once for each removed outgoing edge) to have further control over the final size of the graph.

This manuscript is concluded with a visualisation of a broken crown graph $\mathcal{B}_{11,6}$, displayed in Figure \ref{fig-broken}. In this example, five outgoing edges were removed and all incoming edges were retained. The five enlarged vertices in Figure \ref{fig-broken} correspond to the five outgoing edges that were removed. The four enlarged bottom vertices could all be contracted as described above if desired.

\vspace*{1cm}\begin{figure}[h!]\begin{center}\includegraphics[scale=0.3]{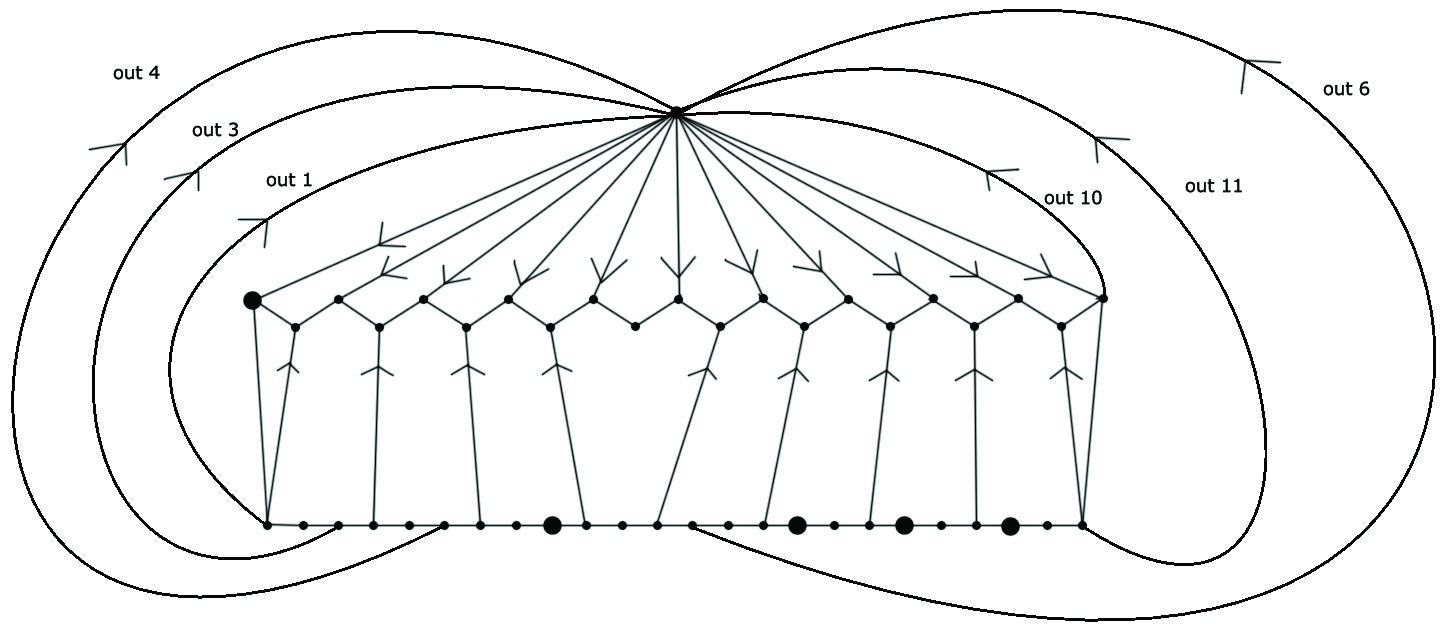}\caption{An example of a Broken Crown graph $\mathcal{B}_{11,6}$ constructed by removing outgoing edges 2, 5, 7, 8 and 9. The 11 incoming edges are labelled in order from left to right.}\label{fig-broken}\end{center}\end{figure}

\section*{Acknowledgement}
The research described in this manuscript was supported by a Research Agreement with DST Group, Australia.

\section*{References}

\end{document}